\documentclass{amsart}
\usepackage{amsmath}
\usepackage{amssymb}
\usepackage{amsthm}
\usepackage{epsfig}

\newcommand{\amsprimary}[1]{{\footnotesize\noindent AMS 2010 \textit{Mathematics subject
classification:} Primary #1\vspace{1pc}}}
\newcommand{\keywordsnames}[1]{{\footnotesize\noindent\textit{Key words:} #1\vspace{1pc}}}

\newtheorem{theorem}{Theorem}[section]
\newtheorem{lemma}{Lemma}[section]
\newtheorem{proposition}{Proposition}[section]
\newtheorem{corollary}{Corollary}[section]

\newtheorem{remark}{Remark}[section]

\title[Stability $m$-fold circles]{On the stability of $m$-fold cicles and
the dynamics of generalized 
curve shortening flows}

\begin{document}
\author{
Jean C. Cortissoz
\and
Alexander Murcia
}
\address{Universidad de los Andes, Bogot\'a DC, COLOMBIA.}
\date{}
\begin{abstract}
In this paper we study the (asymptotic and exponential) stability
of the $m$-fold circle as a solution of the $p$-curve shortening flow ($p\geq 1$ an integer).
\end{abstract}
\maketitle
{\keywordsnames {curve-shortening; symmetry; blow-up; exponential convergence.}}

{\amsprimary {54C44; 35K55.}}

\section{Introduction}

Let 
\[
x:\,\mathbb{S}^1\times\left[0,T\right)\longrightarrow \mathbb{R}^2
\]
be a family of smooth immersions of $\mathbb{S}^1$, the unit circle, into $\mathbb{R}^2$.
In this paper we will say that $x$ satisfies the $p$-curve shortening flow, $p\geq 1$, if
$x$ satisfies
\begin{equation}
\label{generalflow}
\frac{\partial x}{\partial t}=-\frac{1}{p}k^{p}N,
\end{equation}
where $k$ is the curvature of the embedding and $N$ is the normal vector pointing outwards the
region bounded by $x\left(\cdot,t\right)$.

Much is known about this family of flows. To select a few among many beautiful and fundamental 
works on the subject, we must mention the works of Gage and Hamilton (\cite{GageHamilton}),
and of Ben Andrews (\cite{Andrews,Andrews2}).

In this paper we will be concerned with the stability of $m$-fold circles as solutions to the 
$p$-curves shortening flow. As it is well known there are small perturbations of the $2$-fold
circle that do not behave asymptotically as a shrinking $2$-fold circle. However,
very recently, Wang in \cite{Wang} showed the asymptotic stability of $m$-fold circles
under certain small $\frac{2\pi m}{n}$-periodic perturbations as solutions
to the curve shortening flow (i.e., for the case when $p=1$). In this note we will extend the 
work of Wang in two ways:  we will show asymptotic stability results for the $m$-fold circle as a solution
to the $p$-curve shortening flow for $p$ any positive integer,
and we will provide sharp stabilization estimates for the
curvature of solutions to the $p$-curve shortening flow that
are appropriate small perturbations of an $m$-fold circle. Other interesting works, 
besides Wang's, regarding
stability of solutions to the curve shortening flow are the by now classical 
papers of Abresch and Langer (\cite{AbreschLanger}), and of Epstein and Weinstein (\cite{EpsteinWeinstein}).
  
To study the stability of $m$-fold circles as solutions to (\ref{generalflow}) we will consider
the 
Boundary Value Problem
\begin{equation}
\label{fastcurvature0}
\left\{
\begin{array}{l}
\frac{\partial k}{\partial t}=k^2\left(k^{p-1}k_{\theta\theta}+\left(p-1\right)
k^{p-2}k_{\theta}^2+\frac{1}{p}k^{p}\right)
\quad\mbox{in} \quad\left[0,\frac{2\pi}{\lambda}\right]\times\left(0,T\right)\\
k\left(\theta,0\right)=\psi\left(\theta\right)
\qquad \mbox{on} \quad \left[0,\frac{2\pi}{\lambda}\right]
\end{array}
\right.
\end{equation}
with periodic boundary conditions, $\lambda>\sqrt{\frac{p+2}{p}}$, and $\psi$ a {\it strictly positive
function}. 
As it is (\ref{fastcurvature0}) has no immediate geometric interpretation.
However, when $\lambda$ is an appropriate rational number, (\ref{fastcurvature0}) 
is the evolution equation of the curvature of a curve being deformed via (\ref{generalflow})
under the assumption that the curve being deformed satisfies certain symmetries; more precisely,
 when $\lambda=\frac{n}{m}$, the study of equation (\ref{fastcurvature0}) is
equivalent to the study of (\ref{generalflow}) when the initial data is a perturbation
of an $m$-fold circle under a $\frac{2\pi m}{n}$-periodic perturbation.

The method we will use to prove our stability results was introduced in \cite{Cortissoz} 
(inspired by \cite{MattinglySinai}) to study the blow-up behavior of certain
nonlinear parabolic equations with periodic boundary conditions, but as the
reader will notice, it can be also used to study the stability of certain
blow-up profiles, 
and the regularity of solutions to (\ref{fastcurvature0}) 
(as a byproduct of the method we will employ, it can be shown that, under certain
conditions on the initial data, solutions to (\ref{fastcurvature0}) are analytic).
So we hope that the reader may find the method used in this paper of independent
interest.

The organization of this paper is as follows:  in Section \ref{mainresult} we present
our main result, its application to the stability problem of $m$-fold circles,
and its proof; in Section \ref{stabilitysection} we discuss the 
exponential stability of the constant steady solution of the normalized version
of (\ref{fastcurvature0}). 

%%%%%%%%%%%%%%%%%%%%%%%%%%%%%%%%%%%%%%%%%%%%%%%%%
\section{Main Result}
\label{mainresult}
Our results on the behavior of the $p$-curve shortening flow
will follow as a consequence of a result on the behavior of
solutions to (\ref{fastcurvature0}), that we will promptly describe; but before
we state our main result, let us set some definitions and notation.
Given $f\in L^2\left(\left[0,\frac{2\pi}{\lambda}\right]\right)$,  we write its Fourier expansion as,
\[
\sum_{n\in \mathbb{Z}} \hat{f}\left(n\right)e^{i \lambda n x},
\]
and define the family of seminorms,
\[
\left\|f\right\|_{\beta}=\max \left\{\sup_{n\neq 0}\left|n\right|^{\beta}\left|Re\left(\hat{f}\left(n\right)\right)\right|,
\sup_{n\neq 0}\left|n\right|^{\beta}\left|Im\left(\hat{f}\left(n\right)\right)\right|\right\}.
\]
As is customary, we define $C^l\left(\left[0,\frac{2\pi}{\lambda}\right]\right)$ as the space of functions with continuous
derivatives of order $l$, equipped with the norm
\[
\left\|f\right\|_{C^l\left(\left[0,\frac{2\pi}{\lambda}\right]\right)}=
\max_{j=0,1,2,\dots,l}\sup_{\theta\in\left[0,\frac{2\pi}{\lambda}\right]}
\left|\frac{d^j f \left(\theta\right)}{d\theta^j}\right|.
\]

The main result of this paper is the following theorem:
\begin{theorem}
\label{maintheorem2}
Let $\lambda > \sqrt{\frac{p+2}{p}}$.
There exists a constant $c_{p,\lambda}>0$ such that if 
\begin{equation}
\label{condition3}
\frac{\lambda}{2\pi}\int_0^{\frac{2\pi}{\lambda}} \psi\left(\theta\right)\,d\theta \geq 
c_{p,\lambda}\left\|\psi\right\|_2,
\end{equation}
then a solution to (\ref{fastcurvature0}) with initial condition $\psi$ 
is analytic (in $\theta$) and satisfies
\begin{equation}
\label{maxwell}
\left\|k\left(\theta,t\right)-\hat{k}\left(0,t\right)\right\|_{C^l\left(\left[0,\frac{2\pi}{\lambda}\right]\right)}
\leq E_{l,p,\lambda} \left(T-t\right)^{\left(\lambda^2-\frac{p+2}{p}\right)\frac{p}{p+1}},
\end{equation}
where $0<T<\infty$ is the blow-up time of the solution to (\ref{fastcurvature0}) with initial condition $\psi$. 
\end{theorem}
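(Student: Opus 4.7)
The plan is to work in Fourier variables and exploit the contrast between the mean of $k$ and its oscillating part, in the spirit of \cite{Cortissoz, MattinglySinai}. I would split $k(\theta,t)=K(t)+v(\theta,t)$ with $K(t):=\hat{k}(0,t)$ and $a_n(t):=\hat{k}(n,t)$. Inserting the Fourier expansion into (\ref{fastcurvature0}) and isolating the terms linear in the oscillating modes, one computes, for each $n\neq 0$,
\[
\dot a_n = K^{p+1}\Bigl(\tfrac{p+2}{p}-\lambda^2 n^2\Bigr)\, a_n + \mathcal N_n(a),
\]
where $\mathcal N_n$ gathers contributions at least quadratic in $(a_m)_{m\neq 0}$. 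The hypothesis $\lambda^2>(p+2)/p$ makes the linear coefficient strictly negative for all $n\neq 0$. In parallel, the mean satisfies $\dot K = \tfrac1p K^{p+2} + \mathcal R(v)$ with $\mathcal R$ quadratic in $v$, so at leading order $K$ blows up in finite time $T$ with $K^{p+1}(t)(T-t)\to p/(p+1)$.

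The next step is to translate these ODE pictures into decay estimates. Using $\int_0^t K^{p+1}\,ds \sim -\tfrac{p}{p+1}\log(T-t)$ as $t\to T^-$ in the linearized equation for $a_n$ gives
\[
|a_n(t)|\lesssim (T-t)^{(\lambda^2 n^2-(p+2)/p)\,p/(p+1)},
\]
which for $n=1$ is exactly the exponent in (\ref{maxwell}). The role of condition (\ref{condition3}) is to guarantee that at $t=0$ the mean $\hat\psi(0)$ dominates the weighted $\ell^\infty$ seminorm $\|\psi\|_2=\sup_{n\neq 0} n^2\max(|\mathrm{Re}\,\hat\psi(n)|,|\mathrm{Im}\,\hat\psi(n)|)$, which is precisely what is needed for the linear picture to propagate.

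The core step is a Mattingly--Sinai-style bootstrap. I would postulate an invariant cone
\[
|a_n(t)|\le M\,|n|^{-\beta}\,(T-t)^{(\lambda^2 n^2-(p+2)/p)\,p/(p+1)}, \qquad n\neq 0,
\]
with entry at $t=0$ secured by (\ref{condition3}). Estimating $\mathcal N_n(a)$ via convolution bounds in the seminorms $\|\cdot\|_\beta$, one finds that on the cone the nonlinearity is at most a factor $M$ times the linear damping $K^{p+1}\lambda^2 n^2 |a_n|$, so that for $c_{p,\lambda}$ small enough the cone is forward invariant up to $T$. Iterating the bootstrap with $\beta$ arbitrarily large and tracking the growth $M=M(\beta)$ yields Gevrey/analytic control of the Fourier tail, and summing the resulting decay against $(\lambda|n|)^l$ converts the mode-wise estimate into the $C^l$ bound (\ref{maxwell}).

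The main obstacle will be closing this bootstrap with explicit constants. The nonlinearity $k^{p+1}k_{\theta\theta}+(p-1)k^p k_\theta^2+\tfrac1p k^{p+2}$ is a $(p+2)$-fold Fourier convolution carrying quadratic weights $n_{p+2}^2$ and $n_{p+1}n_{p+2}$; after symmetrization one must show these are strictly dominated by $\lambda^2 n^2 K^{p+1}$ on the cone, and this comparison is what ultimately fixes $c_{p,\lambda}$. Particular care is needed for terms in which only two of the $p+2$ factors are oscillating modes, since these are closest in size to the linear damping. A parallel difficulty is keeping the correction $\mathcal R(v)$ small enough that the blow-up profile of $K$ agrees asymptotically with the ODE $\dot K = K^{p+2}/p$: the ``clock'' $T-t$ in (\ref{maxwell}) must be the blow-up time of the full nonlinear solution, not of its ODE approximation, so the mean equation and the cone estimate have to be closed simultaneously.
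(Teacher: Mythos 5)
Your overall architecture --- Fourier decomposition into mean plus oscillation, a Mattingly--Sinai invariant region entered via (\ref{condition3}), and the observation that $\int K^{p+1}\,ds\sim-\tfrac{p}{p+1}\log(T-t)$ produces the exponent $\alpha(\lambda,n,p)=(\lambda^2n^2-\tfrac{p+2}{p})\tfrac{p}{p+1}$ --- matches the paper. But the core step, the single time-weighted cone $|a_n(t)|\le M|n|^{-\beta}(T-t)^{\alpha(\lambda,n,p)}$, cannot be closed as stated, for two reasons. First, entry at $t=0$ is not secured by (\ref{condition3}): that hypothesis only gives $|n|^2|\hat\psi(n)|\le\hat\psi(0)/c_{p,\lambda}$, whereas your cone at $t=0$ demands $|\hat\psi(n)|\le M|n|^{-\beta}T^{cn^2}$, a Gaussian-in-$n$ decay requirement when $T<1$. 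Second, and more seriously, forward invariance on the boundary forces the comparison $K^{p+1}(t)(T-t)\ge\tfrac{p}{p+1}$ \emph{exactly}: the time derivative of the weight $(T-t)^{\alpha(n)}$ must be beaten by the linear damping $-K^{p+1}(\lambda^2n^2-\tfrac{p+2}{p})$, and the ratio of the two is precisely $K^{p+1}(T-t)\cdot\tfrac{p+1}{p}$. The a priori blow-up asymptotics only give $K^{p+1}(T-t)\ge\tfrac{p}{p+1}(1-\eta)$, so a direct Gronwall/cone argument yields only the lossy exponent $(1-\eta)\alpha$. Your ``$\sim$'' hides exactly this: to get the sharp power you need $\int_\tau^tK^{p+1}\,ds=-\tfrac{p}{p+1}\log(T-t)+O(1)$ with a \emph{bounded} error, not merely asymptotic equivalence. (Also, $c_{p,\lambda}$ must be taken \emph{large}, not small: (\ref{condition3}) reads $\hat\psi(0)\ge c_{p,\lambda}\|\psi\|_2$, so enlarging $c_{p,\lambda}$ shrinks the admissible perturbation.)

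The paper resolves this with a three-stage scheme that you would need to reproduce. Stage one is a \emph{time-independent} trapping region, $\hat v(0,t)\ge c_{p,\lambda}\max_n n^2\max\{|\mathrm{Re}\,\hat v(n,t)|,|\mathrm{Im}\,\hat v(n,t)|\}$ for $\hat v(n,t)=e^{\gamma|n|t}\hat k(n,t)$, proved invariant for finite-dimensional Galerkin truncations and then passed to the limit; this gives analyticity and $|\hat k(n,t)|\lesssim\hat k(0,t)e^{-\gamma|n|t}/n^2$, but no power of $(T-t)$. Stage two feeds this into the ODE for $\hat k(0,t)$ to get first a crude decay $(T-t)^\epsilon$ for the nonzero modes, and then the refined lower bound $\hat k(0,t)\ge(\tfrac{p}{p+1})^{1/(p+1)}\bigl((T-t)+(T-t)^{1+2/(p+1)}\bigr)^{-1/(p+1)}$, which is what makes the error in $\int K^{p+1}$ an additive $O(1)$. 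Stage three iterates Duhamel's formula: each pass upgrades the decay exponent from $\epsilon$ to $\min\{\alpha,1+2\epsilon\}$, then to $\min\{\alpha,3+4\epsilon\}$, and so on, saturating at $\alpha(\lambda,n,p)$ after finitely many steps. You correctly flag that ``the mean equation and the cone estimate have to be closed simultaneously,'' but the proposal supplies no mechanism for doing so; the decoupling into (time-independent trapping) $\rightarrow$ (refined blow-up rate of the mean) $\rightarrow$ (iterated Duhamel bootstrap) is the missing idea.
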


Let us remark that the fact that a solution to (\ref{fastcurvature0}) with initial condition $\psi>0$
blows up in finite time is a consequence of the Maximum Principle for Parabolic Equations. 
Also, the reader should notice that if $c_{p,\lambda}>0$ is large enough then there is no need to assume that $\psi$
is strictly positive, since it would be a consequence of (\ref{condition3}): this
is why even though we are assuming the positivity of $\psi$, this assumption does not appear
explicitly in the statement of the theorem.

Next we give the promised geometric corollaries of our main result.

\subsection{A stability result for $m$-fold circles}\label{stability1}
Theorem \ref{maintheorem2} has as a corollary an asymptotic (nonlinear) stability result for 
small perturbations of $m$-fold circles. Indeed, if an 
$m$-fold circle is perturbed by a $\frac{2\pi m}{n}$-periodic function, then its curvature function satisfies
equation (\ref{fastcurvature0}) with $\lambda=\frac{n}{m}$, $n$ and
$m$ mutually primes. Hence, if 
\[
\frac{n}{m}> \sqrt{\frac{p+2}{p}},
\]
and if the perturbation of the $m$-fold
circle by the $\frac{2\pi m}{n}$-periodic function  
 is such that its curvature function satisfies the hypothesis of Theorem \ref{maintheorem2}, then
it will shrink asymptotically as an $m$-fold circle under the $p$-curve shortening flow. 

Let us explain with more care. Adopting a similar notation as in \cite{Wang}, let $\gamma_m$ be the 
$m$-fold circle, i.e.,
\[
\gamma_m\left(\theta\right)=\left(\cos \theta,\sin\theta\right), \quad \theta\in\left[0,2\pi m\right],
\]
and let $\varphi$ be a smooth function of the normal angle ($\theta$) of $\gamma_m$
of period $\frac{2\pi m}{n}$ with $\frac{n}{m}>\sqrt{\frac{p+2}{p}}$. Consider the curve
\[
\gamma_{0\delta}=\gamma_m+ \delta \varphi \mathbf{N},
\]
where $\mathbf{N}$ is the outward unit normal to $\gamma_m$. It is
assumed that $\left|\delta\right|>0$ is small enough so that the curvature
of $\gamma_{0\delta}$ is strictly positive. Then we have the following result:
\begin{theorem}
If $\left|\delta\right|>0$ is small enough, then the solution to the $p$-curve shortening flow shrinks to a point
asymptotically like an $m$-fold circle. 
\end{theorem}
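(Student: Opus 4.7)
\medskip
\noindent\emph{Proof strategy.}
The plan is to reduce the assertion to an application of Theorem~\ref{maintheorem2}. I would parametrize $\gamma_{0\delta}$ by the outward normal angle $\theta$ inherited from $\gamma_m$; since $\varphi$ has period $2\pi m/n$, the curvature $\psi_\delta$ of $\gamma_{0\delta}$ is a strictly positive, $2\pi/\lambda$-periodic function of $\theta$, with $\lambda=n/m>\sqrt{(p+2)/p}$. Because the flow is purely normal and the initial curve is locally convex (by the positivity of $\psi_\delta$), the normal-angle parametrization persists in time and the curvature $k(\theta,t)$ of the evolving curve $\gamma(\cdot,t)$ satisfies~\eqref{fastcurvature0} with initial datum $\psi_\delta$. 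Every claim of the theorem thus becomes a claim about this one PDE.

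The next step is to verify the hypothesis~\eqref{condition3} for small $|\delta|>0$. Expanding the curvature of $\gamma_m+\delta\varphi\mathbf{N}$ to first order in $\delta$ gives $\psi_\delta(\theta)=1-\delta(\varphi+\varphi'')(\theta)+O(\delta^2)$, smoothly and uniformly in $\theta$. Hence the mean $\hat{\psi}_\delta(0)=1+O(\delta)$, while the Fourier seminorm $\|\psi_\delta\|_2=O(\delta)$: the nonzero Fourier modes of $\psi_\delta$ are $O(\delta)$ in absolute value, and smoothness of $\varphi$ makes the corresponding Fourier coefficients decay faster than any polynomial, so $\sup_{n\neq 0}|n|^2|\widehat{\psi_\delta}(n)|$ is easily controlled. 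For $|\delta|$ small enough the left-hand side of~\eqref{condition3} is close to $1$ and the right-hand side is small, so the hypothesis is satisfied.

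Theorem~\ref{maintheorem2} then delivers~\eqref{maxwell}, whose exponent $\alpha=(\lambda^2-(p+2)/p)\,p/(p+1)$ is strictly positive and forces $k(\theta,t)-\hat{k}(0,t)\to 0$ in every $C^l$ at rate $(T-t)^\alpha$ as $t\to T$. Integrating~\eqref{fastcurvature0} over $[0,2\pi/\lambda]$ and using this stabilization shows that $\hat{k}(0,t)$ behaves, to leading order, like the solution of $\dot K=\frac{1}{p+1}K^{p+1}$, giving the standard blow-up rate $\hat{k}(0,t)\sim c_\ast(T-t)^{-1/(p+1)}$. Reconstructing $\gamma(\cdot,t)$ from its curvature via the normal-angle parametrization and rescaling lengths by $\hat{k}(0,t)^{-1}$, one obtains that the rescaled curves converge in $C^l$ to a unit $m$-fold circle, while the unrescaled curves contract to a single point whose motion can be tracked using the bound on $k-\hat{k}(0,t)$. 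The main obstacle is this last geometric upgrade: turning the quantitative curvature estimate~\eqref{maxwell} into $C^l$ convergence of the rescaled curves to an $m$-fold circle and convergence of their centers of mass. This requires checking that the stabilization rate $\alpha$ genuinely beats the parabolic scaling set by $\hat{k}(0,t)$, which is exactly why the threshold $\lambda>\sqrt{(p+2)/p}$ enters; once that balance is in hand, the analyticity statement of Theorem~\ref{maintheorem2} makes the reconstruction step routine.
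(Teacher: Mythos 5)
Your proposal is correct and follows exactly the route the paper takes: the paper declares the result ``immediate from Theorem~\ref{maintheorem2}'', the only content being that for $|\delta|$ small the curvature $\psi_\delta$ of $\gamma_{0\delta}$ satisfies hypothesis~(\ref{condition3}), which you verify via the expansion $\psi_\delta=1-\delta(\varphi+\varphi'')+O(\delta^2)$. Your additional discussion of reconstructing the curve from its curvature and rescaling is detail the paper leaves implicit, but it is consistent with (and slightly more careful than) the published argument.
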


How small is $\left|\delta\right|$ in the previous theorem is dictated by the fact that the curvature of
$\gamma_{0\delta}$ must satisfy the hypothesis of Theorem \ref{maintheorem2}. The proof of 
this result is inmediate from Theorem \ref{maintheorem2}, and it extends the results
of Wang (\cite{Wang}), at least 
in the case when $\frac{n}{m}>\sqrt{\frac{p+2}{p}}$. Notice also that the 
condition $\frac{n}{m}>\sqrt{\frac{p+2}{p}}$ translates to 
$\frac{n}{m}>\sqrt{3}$ in the case
of the curve shortening flow which is obviously weaker than
the Abresch-Langer condition $\frac{n}{m}>\sqrt{2}$ (which is covered in Wang's work).

We can be more quantitative in describing how a solution to the normalized $p$-curve shortening
flow with initial condition $\gamma_{0\delta}$ approaches the solution
given by an $m$-fold circle. To this end we consider the following normalized version of 
(\ref{fastcurvature0})
\begin{equation}
\label{normalized}
\left\{
\begin{array}{l}
\frac{\partial \tilde{k}}{\partial \tau}=p
\tilde{k}^{1+\frac{1}{p}}\frac{\partial^2 \tilde{k}}{\partial\theta^2}+p\tilde{k}^{2+\frac{1}{p}}-
p\tilde{k}
\quad \mbox{in}\quad \left[0,\frac{2\pi}{\lambda}\right]\times \left(0,\infty\right)\\
\tilde{k}\left(\cdot,0\right)=\left(\frac{pT}{p+1}\right)^{\frac{1}{p+1}}\psi,
\end{array}
\right.
\end{equation}
with periodic boundary conditions. This normalized equation is obtained from (\ref{fastcurvature0})
by the rescaling and change of time parameter given by
\[
\tilde{k}\left(\theta,t\right)=\left(\frac{p}{p+1}\right)^{\frac{1}{p+1}}\left(T-t\right)^{\frac{1}{p+1}}k\left(\theta,t\right),
\quad \tau=-\frac{1}{p+1}\log\left(1-\frac{t}{T}\right).
\]
So we have:
\begin{corollary}
\label{expconvergence}
Let $\lambda=\frac{n}{m}$, $n$ and $m$ relatively primes, and $\lambda>\sqrt{\frac{p+2}{p}}$.
Let $\gamma_{0\delta}$ be a $\frac{2\pi}{\lambda}$-periodic perturbation of the $m$-fold circle.
There exists a $c_{p,\lambda}>0$ such that if $\left|\delta\right|$ is small enough so that
the curvature
$\psi$ of $\gamma_{0\delta}$ satisfies
\begin{equation}
\label{curvhyp}
\frac{\lambda}{2\pi}\int_0^{\frac{2\pi}{\lambda}} \psi\left(\theta\right)\,d\theta
=\frac{\lambda}{2\pi}\int_0^{\frac{2\pi}{\lambda}} \psi\left(\theta\right)\,d\theta\geq c_{p,\lambda}\left\|\psi\right\|_2,
\end{equation}
then
the curvature $\tilde{k}\left(\cdot,\tau\right)$ 
of the solution to the normalized $p$-curve shortening flow with initial condition
$\left(\frac{p+1}{pT}\right)^{\frac{1}{p+1}}\gamma_{0\delta}$ satisfies
\[
\left\|\tilde{k}\left(\theta,\tau\right)-\frac{\lambda}{2\pi}\int_{0}^{\frac{2\pi}{\lambda}}
\tilde{k}\left(\theta,\tau\right)\,d\theta \right\|_{C^l
\left[0,\frac{2\pi}{\lambda}\right]}
\leq A_{l,p,\lambda}\exp\left(-\beta\left(p,l\right)\tau\right),
\] 
where 
\[
\beta\left(p,\lambda\right)=\left(\lambda^2-\frac{p+2}{p}\right)p+1=\lambda^2p-p-1,
\]
and $A_{l,p,\lambda}>0$ is a constant, which only depends on the initial condition, and also on $p$ and $\lambda$.
Furthermore, the deformation towards a circle is through analytic curves.
\end{corollary}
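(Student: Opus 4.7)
The plan is to deduce Corollary \ref{expconvergence} directly from Theorem \ref{maintheorem2} by translating the power-law bound of that theorem into exponential decay via the rescaling recorded immediately above the corollary. Because the initial curve $\left(\frac{p+1}{pT}\right)^{1/(p+1)}\gamma_{0\delta}$ has curvature $\left(\frac{pT}{p+1}\right)^{1/(p+1)}\psi$, the substitutions
\[
\tilde{k}(\theta,\tau)=\left(\frac{p}{p+1}\right)^{\frac{1}{p+1}}(T-t)^{\frac{1}{p+1}}k(\theta,t),\qquad \tau=-\tfrac{1}{p+1}\log\!\left(1-\tfrac{t}{T}\right),
\]
identify a solution $k$ of (\ref{fastcurvature0}) with datum $\psi$ with the solution $\tilde{k}$ of (\ref{normalized}) whose evolution one wants to control. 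Hypothesis (\ref{curvhyp}) is identical to the hypothesis of Theorem \ref{maintheorem2} for $\psi$, so that theorem applies without modification.

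First, Theorem \ref{maintheorem2} yields
\[
\|k(\cdot,t)-\hat k(0,t)\|_{C^l}\leq E_{l,p,\lambda}(T-t)^{\left(\lambda^2-\frac{p+2}{p}\right)\frac{p}{p+1}}.
\]
Since the rescaling factor $c(t)=\left(\frac{p}{p+1}\right)^{1/(p+1)}(T-t)^{1/(p+1)}$ does not depend on $\theta$, it commutes with averaging in $\theta$ and with every $C^l$ seminorm; in particular, the average of $\tilde k(\cdot,\tau)$ is $c(t)\hat k(0,t)$, so the left-hand side of the estimate in the corollary equals $c(t)\|k(\cdot,t)-\hat k(0,t)\|_{C^l}$. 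Combining the two factors of $(T-t)$ and simplifying gives the exponent
\[
\frac{1}{p+1}+\left(\lambda^2-\frac{p+2}{p}\right)\frac{p}{p+1}=\frac{p\lambda^2-p-1}{p+1}=\frac{\beta(p,\lambda)}{p+1}.
\]
Using $T-t=Te^{-(p+1)\tau}$ converts $(T-t)^{\beta(p,\lambda)/(p+1)}$ into $T^{\beta(p,\lambda)/(p+1)}e^{-\beta(p,\lambda)\tau}$, which produces the claimed exponential decay with explicit constant
\[
A_{l,p,\lambda}=E_{l,p,\lambda}\left(\frac{p}{p+1}\right)^{\frac{1}{p+1}}T^{\frac{\beta(p,\lambda)}{p+1}}.
\]

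The analyticity assertion transfers verbatim because, at each fixed $\tau$, the rescaling is multiplication by a positive scalar, which preserves analyticity in $\theta$. The hypothesis $\lambda>\sqrt{(p+2)/p}$ is precisely what ensures $\beta(p,\lambda)>0$, so that the exponential genuinely decays. I do not expect any serious technical obstacle here: Corollary \ref{expconvergence} is essentially a change-of-variables reformulation of Theorem \ref{maintheorem2}, with the entire analytic content already packaged in that theorem; the only care required is the elementary bookkeeping of exponents carried out above.
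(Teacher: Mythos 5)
Your proposal is correct and is exactly the argument the paper intends (the paper leaves the corollary as an immediate consequence of Theorem \ref{maintheorem2} via the stated rescaling): the factor $(T-t)^{1/(p+1)}$ from the normalization combines with the exponent $\left(\lambda^2-\frac{p+2}{p}\right)\frac{p}{p+1}$ from (\ref{maxwell}) to give $(T-t)^{\beta(p,\lambda)/(p+1)}$, and $T-t=Te^{-(p+1)\tau}$ converts this to $e^{-\beta(p,\lambda)\tau}$. Your bookkeeping of the exponents, the identification of the average with the zeroth Fourier coefficient, and the transfer of analyticity under scalar multiplication are all accurate.
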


The stabilization rate given by the previous corollary is optimal in the sense that
\[
\beta\left(\lambda,p\right)=\left(\lambda^2-\frac{p+2}{p}\right)p+1=\lambda^2 p -p-1, 
\]
is the smallest eigenvalue of 
the linearization around $w\equiv 1$ of the nonlinear operator
\begin{equation}
\label{ellipticpart}
pu^{1+\frac{1}{p}}\frac{\partial^2 u}{\partial \theta^2}+pu^{2+\frac{1}{p}}-pu,
\quad \theta\in\left[0,\frac{2\pi}{\lambda}\right],
\end{equation}
which corresponds to the elliptic part
of the parabolic equation obtained by normalizing (\ref{fastcurvature0}) as is 
described in the introduction, so we
cannot expect better stabilization rates for the derivatives of $k$. In Section
\ref{stabilitysection}, the last one of this paper, we also show that in the
case of the normalized flow $k\rightarrow 1$ at the rate predicted by 
the linearization of (\ref{ellipticpart}): standard methods predict that
this rate is at least $e^{-\left(\beta\left(\lambda,p\right)-\epsilon\right)\tau}$, for any $\epsilon>0$;
the nonobvious part is then to dispose of the $\epsilon$. 
The reader is advised to consult the interesting
work of Wiegner on the subject (see \cite{Wiegner}).

\subsection{Proof of Theorem \ref{maintheorem2}}
\label{theproof}
The proof of Theorem \ref{maintheorem2} will be given through a series of lemmas,
but the strategy we will follow can be described very succinctly: we shall show
that the Fourier coefficients of the solution to (\ref{fastcurvature0}) minus
its average decay
uniformly to 0 in time at the appropiate rate, when the blow-up time is approached. 
Our main tool is the analysis of the infinite dimensional ODE system satisfied
by these Fourier coefficients.

To start 
with the proof of Theorem \ref{maintheorem2},
let us introduce some notation to make our writing a bit easier.
\[
\hat{u}^{* m}\left(q_1,q_2,\dots,q_{m},t\right)
=\hat{u}\left(q_1,t\right)\hat{u}\left(q_2,t\right)\dots\hat{u}\left(q_{m},t\right),
\]
\[
H\left(p,q_1,q_2\right)=\frac{1}{p}-
\left(p-1\right)\lambda^2q_1q_2-\lambda^2q_1^2,
\]
and
\[
\mathbf{q}=\left(q_1,q_2,\dots,q_{p+1},q_{p+2}\right).
\]
Let $\mathcal{Z}$ be a finite subset of the integers  which contains 0 (i.e, $0\in\mathcal{Z}$),
and which is symmetric around 0 (i.e., if $n\in\mathcal{Z}$ then $-n\in\mathcal{Z}$).
Let
\[
\mathcal{B}_n=\left\{\left(b_1,\dots,b_{p+2}\right)\in\mathbb{Z}^{p+2}:\, 
b_{p+2}=n-b_1-b_2-\dots-b_{p+1}\right\},
\]
and define the set
\[
\mathcal{A}_n=\left\{\mathbf{b}\in \mathcal{B}_n:\,
\mbox{there are}\,\, 1\leq i<j\leq p+2 \,\,
\mbox{such that}\,\, b_i\neq 0 \,\, \mbox{and}\,\, b_j\neq 0\right\}.
\]
Consider the following finite
dimensional approximation of (\ref{fastcurvature0}) (obtained from (\ref{fastcurvature0}) 
after formally taking Fourier transform
and then restricting the infinite dimensional ODE system only to those Fourier wave numbers contained in $\mathcal{Z}$),
\begin{equation}
\label{finiteODE}
\left\{
\begin{array}{rcl}
\frac{d}{dt}\hat{k}_{\mathcal{Z}}\left(0,t\right)&=&\frac{1}{p}\hat{k}_{\mathcal{Z}}\left(0,t\right)^{p+2}+\\ 
&&
\sum_{\mathbf{q}\in\mathcal{A}_0\cap \mathcal{Z}^{p+2}}H\left(p,q_1,q_2\right)
\hat{k}_{\mathcal{Z}}^{*\left(p+2\right)}
\left(\mathbf{q},t\right),
 \\ 
&&\\
\frac{d}{dt}\hat{k}_{\mathcal{Z}}\left(n,t\right)&=&\left(\frac{p+2}{p}
-\lambda^2n^2\right)\hat{k}_{\mathcal{Z}}\left(0,t\right)^{p+1}\hat{k}_{\mathcal{Z}}\left(n,t\right) +\\ 
&&
\sum_{\mathbf{q}\in\mathcal{A}_n\cap \mathcal{Z}^{p+2}}H\left(p,q_1,q_2\right)
\hat{k}_{\mathcal{Z}}^{*\left(p+2\right)}\left(\mathbf{q},t\right), \,\,\mbox{if} \,\,n\neq 0,
\,n\in \mathcal{Z},
\end{array}
\right.
\end{equation}
with initial condition
\begin{equation}
\hat{\psi}_{\mathcal{Z}}\left(n\right)=\hat{\psi}\left(n\right), \quad \mbox{if}\quad n\in\mathcal{Z}.
\end{equation}

Notice, and this will be important but not
explicitly mentioned in our arguments, that the symmetry of $\mathcal{Z}$ guarantees that
\[
\sum_{\mathbf{q}\in \mathcal{A}_0\cap\mathcal{Z}^{p+2}}H\left(p,q_1,q_2\right)
\hat{k}_{\mathcal{Z}}^{*\left(p+2\right)}\left(q_1,\dots,q_{p+2},t\right)
\]
is real valued.

Our first lemma gives an interesting estimate on the behavior of
solutions to (\ref{finiteODE}).
\begin{lemma}[Trapping Lemma]
\label{trapping}
There exists a constant $c_{p,\lambda}>0$ independent of the choice of $\mathcal{Z}$ such that if the
initial datum $\psi$ satisfies (\ref{condition3}) then there exist a $\gamma>0$ that depends on $\psi$ such that
the solution to (\ref{finiteODE}) satisifies 
\[
\left|\hat{k}_{\mathcal{Z}}\left(n,t\right)\right|
\leq \frac{c_{p,\lambda}\hat{\psi}\left(0\right) e^{-\gamma\left|n\right|t}}{\left|n\right|^{2}}
\quad n\neq 0.
\]
\end{lemma}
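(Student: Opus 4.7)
The plan is to prove this by a continuous-induction (barrier) argument along the ansatz
\[
B(n,t) = \frac{c_{p,\lambda}\hat\psi(0)\, e^{-\gamma |n| t}}{|n|^2},\qquad n\neq 0,
\]
with $\gamma>0$ to be chosen depending on $\psi$ (this is the $\psi$-dependent parameter announced in the statement). Letting
\[
T^{*} = \sup\bigl\{t\in[0,T)\,:\ |\hat k_{\mathcal Z}(n,s)|\le B(n,s)\ \text{for all } n\in\mathcal Z\setminus\{0\},\ s\in[0,t]\bigr\},
\]
the goal is to rule out $T^{*}<T$ by showing that at any first-crossing time the derivative $\frac{d}{dt}|\hat k_{\mathcal Z}(n^{*},t)|$ is strictly less than $\frac{d}{dt}B(n^{*},t)=-\gamma|n^{*}|B(n^{*},t)$, producing a contradiction. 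The universal constant $c_{p,\lambda}$ is eventually fixed \emph{small}, while $\gamma$ is small in dependence on $\psi$; smoothness of $\psi$ together with hypothesis (\ref{condition3}) then delivers the initial inclusion $|\hat\psi_{\mathcal Z}(n)|\le B(n,0)$ with the universal $c_{p,\lambda}$, after absorbing any initial deficit into $\gamma$.

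First I would handle the zero mode: the leading term $\tfrac1p\hat k_{\mathcal Z}(0,t)^{p+2}$ in its ODE is positive, and the remainder $\sum_{\mathcal A_0}H\,\hat k_{\mathcal Z}^{*(p+2)}$ is controlled by the barrier, so $\hat k_{\mathcal Z}(0,t)$ stays monotone nondecreasing and $\asymp\hat\psi(0)$ throughout $[0,T^{*})$. Next I would read the ODE for $n^{*}\neq 0$: the linear part contributes
\[
-\bigl(\lambda^2(n^{*})^2-\tfrac{p+2}{p}\bigr)\hat k_{\mathcal Z}(0,T^{*})^{p+1}\hat k_{\mathcal Z}(n^{*},T^{*}),
\]
of order $(n^{*})^2\hat\psi(0)^{p+1}B(n^{*},T^{*})$, which has the right sign precisely because $\lambda^2>(p+2)/p$.

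The crux is then to dominate the nonlinear sum $\sum_{\mathbf q\in\mathcal A_{n^{*}}\cap\mathcal Z^{p+2}}H(p,q_1,q_2)\,\hat k_{\mathcal Z}^{*(p+2)}(\mathbf q,T^{*})$ by the barrier. I would partition according to the set $\mathcal S\subseteq\{1,\ldots,p+2\}$ of indices with $q_i\neq 0$ (necessarily $|\mathcal S|\ge 2$ by the definition of $\mathcal A_{n^{*}}$); the $i\notin\mathcal S$ factors are $\hat k_{\mathcal Z}(0,T^{*})\asymp\hat\psi(0)$, and the $i\in\mathcal S$ factors are bounded via the ansatz by $c_{p,\lambda}\hat\psi(0)\,|q_i|^{-2}e^{-\gamma|q_i|T^{*}}$. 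Since $\sum_{i\in\mathcal S}|q_i|\ge|n^{*}|$, the exponentials multiply into $e^{-\gamma|n^{*}|T^{*}}$ and match the barrier temporally. Using $|H(p,q_1,q_2)|\le \tfrac1p+\lambda^2 q_1^2+(p-1)\lambda^2|q_1q_2|$ and iterating the elementary convolution estimate $\sum_{a+b=n,\,ab\ne0}(ab)^{-2}\lesssim n^{-2}$ yields, uniformly in $\mathcal Z$, a bound of the form $K_{p,\lambda}\,c_{p,\lambda}^{2}\,\hat\psi(0)^{p+1}\,B(n^{*},T^{*})$ for the nonlinear sum (the $|\mathcal S|=2$ case is leading; larger $|\mathcal S|$ contributes additional small factors $c_{p,\lambda}$). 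Choosing $c_{p,\lambda}$ so small that $K_{p,\lambda}c_{p,\lambda}^{2}\le \tfrac12(\lambda^2-(p+2)/p)$, and then $\gamma$ so small (in terms of $\hat\psi(0)$, hence of $\psi$) that $\gamma|n^{*}|\le \tfrac12\bigl(\lambda^2(n^{*})^2-(p+2)/p\bigr)\hat\psi(0)^{p+1}$ for every $n^{*}\neq 0$, closes the bootstrap.

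The main obstacle is the convolution bound just invoked. The factor $\lambda^2 q_1^2$ inside $H(p,q_1,q_2)$ cancels exactly one power of $|q_1|^{-2}$ from the barrier, so the iterated sum is borderline and the required $|n^{*}|^{-2}$ decay has to come entirely from the remaining weights; tracking the combinatorics across all possible placements of the distinguished positions $q_1,q_2$ relative to $\mathcal S$, while keeping the constant uniform in $\mathcal Z$ (effectively, bounding the sum extended to all of $\mathbb Z^{p+2}$), is where most of the labour of the proof sits, and it is precisely this estimate that forces $c_{p,\lambda}$ to be small rather than large.
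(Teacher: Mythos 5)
Your overall scheme---a first-crossing/invariant-region argument on the Fourier ODE system, with the linear damping beating the $H$-weighted convolution term---is the same as the paper's, but your normalization of the constant is inverted in a way that breaks the argument. You fix $c_{p,\lambda}$ \emph{small}, yet the same $c_{p,\lambda}$ appears in the hypothesis (\ref{condition3}) and in the barrier. Condition (\ref{condition3}) only yields $|\hat\psi(n)|\le \sqrt{2}\,\hat\psi(0)/(c_{p,\lambda}n^2)$, whereas your barrier requires $|\hat\psi(n)|\le c_{p,\lambda}\hat\psi(0)/n^2$ at $t=0$; for small $c_{p,\lambda}$ these are incompatible (e.g.\ $\psi=1+\varepsilon\cos(\lambda\theta)$ with $2c_{p,\lambda}<\varepsilon<2/c_{p,\lambda}$ satisfies (\ref{condition3}) but violates the barrier at $t=0$). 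Your proposed fix---``absorbing any initial deficit into $\gamma$''---is vacuous, since $e^{-\gamma|n|\cdot 0}=1$ for every $\gamma$, so the continuous induction cannot start. The paper resolves exactly this tension by taking $c_{p,\lambda}$ \emph{large} and using the relative trapping region $\Omega_{\mathcal Z}=\{w:\,w(0)\ge c_{p,\lambda}\max_n\{n^2|\mathrm{Re}\,w(n)|,n^2|\mathrm{Im}\,w(n)|\}\}$ for the weighted modes $\hat v(n,t)=e^{\gamma|n|t}\hat k_{\mathcal Z}(n,t)$: membership at $t=0$ is then literally (\ref{condition3}), and the boundary comparison pits a damping term of size $c_{p,\lambda}^{p+1}M^{p+2}$ against a nonlinearity bounded by a polynomial of degree at most $p$ in $c_{p,\lambda}$ times $M^{p+2}$, so large $c_{p,\lambda}$ wins (cf.\ the explicit value $c_{1,\lambda}=64\lambda^2/(\lambda^2-3)$).

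Two further points. First, your assertion that $\hat k_{\mathcal Z}(0,t)\asymp\hat\psi(0)$ on $[0,T^{*})$ is false: the zero mode satisfies $\frac{d}{dt}\hat k_{\mathcal Z}(0,t)\ge\frac{1}{2p}\hat k_{\mathcal Z}(0,t)^{p+2}$ inside the region and blows up in finite time. This particular defect is repairable---the damping carries $\hat k(0,t)^{p+1}$ while the worst nonlinear terms carry at most $\hat k(0,t)^{p}$, so the growth of the zero mode only helps---but it is one more reason the paper's \emph{relative} region (modes controlled by the current $\hat k(0,t)$ rather than by the fixed $\hat\psi(0)$) is the cleaner choice. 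Second, your claimed bound $K_{p,\lambda}c_{p,\lambda}^2\hat\psi(0)^{p+1}B(n^{*},T^{*})$ for the nonlinear sum is not correct as stated: because of the $\lambda^2q_1^2$ in $H$, the weighted convolution sum is only $O(1)$ in $n^{*}$ (e.g.\ $\sum_{q_1+q_3=n^{*}}\lambda^2q_1^2\cdot q_1^{-2}q_3^{-2}=O(\lambda^2)$), not $O(|n^{*}|^{-2})$, so the factor $|n^{*}|^{-2}$ hidden in $B(n^{*},T^{*})$ cannot be produced. Fortunately it is also not needed: the damping evaluated on the barrier is $(\lambda^2(n^{*})^2-\frac{p+2}{p})\hat k(0,t)^{p+1}B(n^{*},t)\gtrsim(\lambda^2-\frac{p+2}{p})\hat k(0,t)^{p+1}c_{p,\lambda}\hat\psi(0)e^{-\gamma|n^{*}|t}$, i.e.\ it is likewise $O(1)$ in $n^{*}$ after the cancellation, so you only need the convolution sum bounded uniformly in $n^{*}$ and $\mathcal Z$. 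You have misidentified the target of the ``main obstacle,'' and the bound you state for it is false.
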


\begin{proof}
Following \cite{Cortissoz}, first we fix a set $\mathcal{Z}$ as described above. 
Define the function $\hat{v}\left(n,t\right)$ as 
\[
\hat{v}\left(n,t\right):=\hat{\varphi}\left(n,t\right)\hat{k}_{\mathcal{Z}}\left(n,t\right),
\]
where,
\[
\hat{\varphi}\left(n,t\right)=e^{\gamma\left|n\right|t},
\]
with $\gamma$ small, and we let
\[
\Phi\left(q_1,\dots,q_{p+2},t\right)=\frac{\hat{\varphi}\left(q_1+\dots+q_{p+2},t\right)}
{\hat{\varphi}\left(q_1,t\right)\dots\hat{\varphi}\left(q_{p+2},t\right)}.
\]
Then we obtain the following system of ODEs, 
\begin{eqnarray}
\label{vsystem}
\notag
\frac{d}{dt}\hat{v}\left(0,t\right)&=&\frac{1}{p}\hat{v}\left(0,t\right)^{p+2} + \\\notag
&&
\sum_{\mathbf{q}\in \mathcal{A}_0\cap\mathcal{Z}^{p+2}}H\left(p,q_1,q_2\right)\Phi\left(\mathbf{q},t\right)
\hat{v}^{*\left(p+2\right)}\left(\mathbf{q},t\right)
\\
&&\\
\notag
\frac{d}{dt}\hat{v}\left(n,t\right)&=&\left(\frac{p+2}{p}+\frac{\gamma}{\hat{v}\left(0,t\right)^{p+1}}\left|n\right|
-\lambda^2n^2\right)\hat{v}\left(0,t\right)^{p+1}\hat{v}\left(n,t\right) +\\\notag
&&+
\sum_{\mathbf{q}\in \mathcal{A}_n\cap \mathcal{Z}^{p+2}}H\left(p,q_1,q_2\right)\Phi\left(\mathbf{q},t\right)
\hat{v}^{*\left(p+2\right)}\left(\mathbf{q},t\right),\notag
\end{eqnarray}
where $\mathbf{q}=\left(q_1,\dots,q_{p+2}\right)$.

%Let
%\[
%\mathcal{F}=\left\{w\in L^2\left(\left[0,\frac{2\pi}{\lambda}\right]\right):\,
%\hat{w}\left(n\right)=0 \quad\mbox{if}\quad n\notin \mathcal{Z}\right\},
%\]
%Notice that 
%we can identify each function in $\mathcal{F}$ as a point in 
%$\mathbb{C}^{\left|\mathcal{Z}\right|}$, and so
%give . 
Now consider the set $\Omega_{\mathcal{Z}}$
defined by,
\[
\Omega_{\mathcal{Z}}=
\left\{
w\in \mathbb{C}^{\mathcal{Z}}
:\, w\left(0\right) \geq c_{p,\lambda} 
\max_{n\in\mathcal{Z}}\left\{n^2\left|Re\left(w\left(n\right)\right)\right|,
n^2\left|Im\left(w\left(n\right)\right)\right|\right\} 
\right\}.
\]
%We identify functions $u\in L^2\left(\left[0,\frac{2\pi}{\lambda}\right]\right)$ 
%such that $\hat{u}\left(m\right)=0$ if $m\notin \mathcal{Z}$
%with $\mathcal{Z}$-tuples, via the map 
%\[
%u\mapsto \left(\hat{u}\left(m\right)\right)_{m\in\mathcal{Z}}
%\]
%and hence, abusing a little bit of the terminology, we can say that a function $v$ belongs to $\Omega_{\mathcal{Z}}$.
To prove the lemma we must show that if $\psi_{\mathcal{Z}}=\left(\hat{\psi}_{\mathcal{Z}}\left(n\right)\right)_{n\in\mathcal{Z}}$ 
belongs to $\Omega_{\mathcal{Z}}$, so does 
\[
v\left(\cdot,t\right)=\left(\hat{v}\left(n,t\right)\right)_{n\in\mathcal{Z}},
\]
the solution to the ODE system (\ref{vsystem}) with initial condition 
$\psi_{\mathcal{Z}}$,
as long as it is defined ($v\left(\cdot,t\right)$, defined by the ODE system
(\ref{vsystem}), is a trajectory in $\mathbb{C}^{\mathcal{Z}}$, and
what we want to show is that once a trajectory enters $\Omega_{\mathcal{Z}}$, it never leaves).
In order to do so, we must show that whenever $v$ belongs to $\Omega_{\mathcal{Z}}$ up to time $t=\tau$, then
\[
\frac{dv}{dt}\left(\tau\right)=\left(\frac{d\hat{v}}{dt}\left(n,\tau\right)\right)_{n\in\mathcal{Z}} 
\]
points towards the interior of $\Omega_{\mathcal{Z}}$
(in this case, $\tau$ can very well be $0$, and then we have $v=\psi_{\mathcal{Z}}$, which 
by hypothesis belongs to $\Omega_{\mathcal{Z}}$).

Proving that $\frac{dv}{dt}\left(\tau\right)$ points towards the interior of $\Omega_{\mathcal{Z}}$
whenever $v\left(\tau\right)$ belongs to its boundary is a consequence of the fact that
for $c_{p,\lambda}$ conveniently chosen the following inequalities hold (see Section 2 in \cite{MattinglySinai})
\begin{eqnarray}
\label{fundamentalineq1}
\notag
&\frac{1}{p}\hat{v}\left(0,\tau\right)^{p+2}&\\
&\geq&\\
\notag
&\left|\sum_{\mathbf{q}\in \mathcal{A}_0\cap \mathcal{Z}^{p+2}}H\left(p,q_1,q_2\right)\Phi\left(q_1,\dots,q_{p+2},\tau\right)
\hat{v}^{*\left(p+2\right)}\left(q_1,\dots,q_{p+2},\tau\right)\right|&
\end{eqnarray}

\begin{eqnarray}
\label{fundamentalineq2}
&\left|\left(\frac{p+2}{p}+\frac{\gamma}{\hat{v}\left(0,\tau\right)^{p+1}}\left|n\right|
-\lambda^2n^2\right)\hat{v}\left(0,\tau\right)^{p+1}\hat{v}\left(n,\tau\right)\right|& \notag\\
&\geq&\\
&\left|
\sum_{\mathbf{q}\in\mathcal{A}_n\cap\mathcal{Z}^{p+2}}H\left(p,q_1,q_2\right)\Phi\left(q_1,\dots,q_{p+2},\tau\right)
\hat{v}^{*\left(p+2\right)}\left(q_1,\dots,q_{p+2},\tau\right)
\right|.& \notag
\end{eqnarray}

So, in what follows we will show that for a good choice of $c_{p,\lambda}$, inequality (\ref{fundamentalineq2}) holds
whenever $v$ belongs to the boundary of $\Omega_{\mathcal{Z}}$. The same reasoning can then be applied
to prove inequality (\ref{fundamentalineq1}) under the same circumstances, which
would then prove the lemma.

Now, if $v$ belongs to the boundary of $\Omega_{\mathcal{Z}}$,
then it holds that
\[
c_{p,\lambda}\max_{n\in\mathcal{Z},n\neq 0}\left\{\left|n\right|^2\left|Re \left(\hat{v}\left(n,\tau\right)\right)\right|,
\left|n\right|^2\left|Im \left(\hat{v}\left(n,\tau\right)\right)\right|\right\}
\leq \frac{\lambda}{2\pi}\int_0^{\frac{2\pi}{\lambda}}v\left(\theta,\tau\right)\,d\theta,
\]
and that there is an $n\in \mathcal{Z}$ such that
\[
c_{p,\lambda}\left\|v\right\|_2=c_{p,\lambda}\left|n\right|^2\left|Re\left(\hat{v}\left(n,\tau\right)\right)\right|=\hat{v}\left(0,\tau\right)
\]
or the same, but for the imaginary part. 
Under these assumptions, if we write 
$M=\left\|v\right\|_2$,
the righthand side of inequality (\ref{fundamentalineq2}) is bounded above by
\[
f\left(c_{p,\lambda}\right)\lambda^2 M^{p+2},
\]
where $f$ is a polynomial of degree at most $p$,
whereas the lefthand side is bounded from below (in absolute value) by
\[
c_{p,\lambda}^{p+1}\left(\lambda^2-\frac{p+2}{p n^2}-\frac{\gamma}{\hat{k}\left(0,t\right)^{p+1}\left|n\right|}\right)M^{p+2},
\]
and hence, as
\[
\lambda^2-\frac{p+2}{p n^2}-\frac{\gamma}{\hat{k}\left(0,t\right)^{p+1}\left|n\right|}>0
\]
(which can be achieved as long as $\lambda>\sqrt{\frac{p+2}{p}}$, choosing $\gamma>0$ small enough),
by taking $c_{p,\lambda}>0$ large enough, the lemma follows. The reader should have noticed also,
that a judicious choice of $c_{p,\lambda}>0$ implies that $\hat{k}\left(0,t\right)$ is increasing
(which is a consequence of inequality (\ref{fundamentalineq1})), so the choice $\gamma>0$ only depends on
$\hat{\psi}\left(0\right)$.
 
\end{proof}

\begin{remark}
The method of proof of Lemma \ref{trapping}
gives a way to estimate $c_{p,\lambda}$. Indeed, if $p=1$ (the case of the 
curve shortening flow),
 we can take
\[
c_{1,\lambda} = \frac{64\lambda^2}{\lambda^2-3}.
\]
\end{remark}

Although the Trapping Lemma is proven for finite dimensional approximations
of (\ref{fastcurvature0}), the estimate given is strong enough so
it ``passes to the limit", i.e., it holds for solutions to (\ref{fastcurvature0}),
provided that the initial condition satisfies the hypothesis of Theorem \ref{maintheorem2}
-the details are left to the reader. Hence, from now on our estimates 
are given for solutions to (\ref{fastcurvature0}), and in consequence
we drop the dependence on $\mathcal{Z}$.

Notice that the Trapping Lemma implies that solutions to (\ref{fastcurvature0}) for initial 
conditions that satisfy the
hypothesis of Theorem \ref{maintheorem2} are analytic in space. 
Also, from the Trapping Lemma, we can conclude the following useful estimate: for a solution $k$
to (\ref{fastcurvature0}) with initial condition $\psi$, which satisfies the hypothesis
of Theorem \ref{maintheorem2}, we can find constants $C,\mu>0$ such that the estimate
\[
\left|\hat{k}\left(n,t\right)\right|\leq Ce^{-\mu\left|n\right|}, \quad \mbox{for}\quad t\geq \frac{T}{2},
\]
holds. Obviously $C$ and $\mu$ may depend on the initial condition.
The interested reader can compare
this result with the work of Ferrari and Titi in \cite{FerrariTiti}, where they
prove analiticity results for certain semilinear parabolic equations in the $d$-torus.

Also, from the Trapping Lemma  and the ODE satisfied by $\hat{k}\left(0,t\right)$
we obtain the following result on the blow-up behavior of $\hat{k}\left(0,t\right)$.
\begin{lemma}[Blow-up Lemma]
\label{blowup}
Let $T>0$ be the blow-up time of a solution to (\ref{fastcurvature0}).
Under the hypothesis of Theorem \ref{maintheorem2}, for every $\eta>0$ there exists a $t_0>0$ such that
\[
\hat{k}\left(0,t\right)\geq \left(\frac{p}{p+1}\right)^{\frac{1}{p+1}}
\frac{\left(1-\eta\right)^{\frac{1}{p+1}}}{\left(T-t\right)^{\frac{1}{p+1}}}.
\]
for all $t\in\left(t_0,T\right)$.
\end{lemma}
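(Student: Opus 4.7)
The plan is to observe that the ODE for $\hat{k}(0,t)$ (the zero-mode equation in (\ref{finiteODE}), passed to the limit) has the shape
\[
\frac{d}{dt}\hat{k}(0,t) = \frac{1}{p}\hat{k}(0,t)^{p+2} + R(t), \quad R(t) = \sum_{\mathbf{q}\in \mathcal{A}_0} H(p,q_1,q_2)\,\hat{k}^{*(p+2)}(\mathbf{q},t),
\]
and to argue that the Trapping Lemma forces the remainder $R(t)$ to be of strictly lower order than the reaction term $\tfrac{1}{p}\hat{k}(0,t)^{p+2}$ as $t\to T$. The desired lower bound then follows from a one-variable comparison argument.

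For the first step, I note that every $\mathbf{q}\in \mathcal{A}_0$ has at least two nonzero coordinates; if exactly $j\ge 2$ of them are nonzero, then the corresponding summand is $\hat{k}(0,t)^{p+2-j}$ times a product of nonzero-mode coefficients. Using the bound $|\hat{k}(n,t)|\le C\,e^{-\gamma|n|t}/|n|^2$ supplied by the Trapping Lemma and the fact that $|H(p,q_1,q_2)|$ grows only polynomially in $|q_1|$ and $|q_2|$, both the polynomial factor and the combinatorial structure of the sum are absorbed by the $|q_i|^{-2}$ decay (the exponential weight $e^{-\gamma|q_i|t}$ only helps). The series over $\mathcal{A}_0$ is therefore absolutely convergent uniformly in $t\ge T/2$, and the highest-order-in-$\hat{k}(0,t)$ contribution comes from $j=2$, yielding an estimate of the form
\[
|R(t)| \le K\,\hat{k}(0,t)^{p}
\]
for some constant $K$ depending only on $p$, $\lambda$, and $\psi$.

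With this in hand, since $\hat{k}(0,t)\to\infty$ as $t\to T$, for any $\eta'>0$ there is $t_0\in(0,T)$ with $K\hat{k}(0,t)^{p}\le (\eta'/p)\hat{k}(0,t)^{p+2}$ for all $t\ge t_0$, so
\[
\frac{d}{dt}\hat{k}(0,t)\le \frac{1+\eta'}{p}\,\hat{k}(0,t)^{p+2},\qquad t\in[t_0,T).
\]
Setting $f(t)=\hat{k}(0,t)^{-(p+1)}$ turns this into $f'(t)\ge -\frac{(p+1)(1+\eta')}{p}$, and because blow-up at $T$ means $f(T)=0$, integrating from $t$ to $T$ gives $f(t)\le \frac{(p+1)(1+\eta')}{p}(T-t)$, i.e.
\[
\hat{k}(0,t) \ge \left(\frac{p}{p+1}\right)^{\!\!\frac{1}{p+1}}\frac{1}{(1+\eta')^{\frac{1}{p+1}}(T-t)^{\frac{1}{p+1}}}.
\]
Given $\eta\in(0,1)$ in the statement, I choose $\eta'$ small enough that $(1+\eta')^{-1/(p+1)}\ge (1-\eta)^{1/(p+1)}$, which produces exactly the claimed inequality.

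The main obstacle is the estimate $|R(t)|\le K\hat{k}(0,t)^{p}$: one must carefully check that the polynomial growth of $H(p,q_1,q_2)$ in the frequencies is killed by the algebraic and exponential decay supplied by the Trapping Lemma, \emph{uniformly} across the three qualitatively different cases (both of the first two coordinates nonzero, exactly one of them nonzero, or neither nonzero with the two active indices lying in positions $3,\dots,p+2$). Once absolute convergence of the series is secured and the right power of $\hat{k}(0,t)$ is extracted, what remains is a routine scalar ODE comparison.
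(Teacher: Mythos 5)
Your proposal is correct and follows essentially the same route as the paper: both derive from the Trapping Lemma the differential inequality $\frac{d}{dt}\hat{k}(0,t)\leq \frac{1}{p}\hat{k}(0,t)^{p+2}+A\hat{k}(0,t)^{p}$, absorb the lower-order term near the blow-up time using that $\hat{k}(0,t)\to\infty$, and integrate the resulting scalar inequality. In fact you supply more detail than the paper does on bounding the nonlinear sum over $\mathcal{A}_0$ and on the (brief) justification that $\hat{k}(0,t)$ itself blows up.
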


\begin{proof} 
Using the equation satisfied by $\hat{k}\left(0,t\right)$,
by the Trapping Lemma it can be shown that $\hat{k}\left(0,t\right)$ satisfies the
differential inequality
\begin{equation*}
\frac{d}{dt}\hat{k}\left(0,t\right)\leq \frac{1}{p}\hat{k}\left(0,t\right)^{p+2}+A\hat{k}\left(0,t\right)^p,
\end{equation*}
where $A$ is a constant. Notice also that the Trapping Lemma implies that $\hat{k}\left(0,t\right)$
blows up: since $k$ blows up, and $\hat{k}\left(n,t\right)$ is conveniently bounded for $n\neq 0$,
$\hat{k}\left(0,t\right)$ must blow up. 
Hence, for every $\eta>0$, there is a $t_0>0$ such that
\begin{equation*}
\frac{d}{dt}\hat{k}\left(0,t\right)\leq \frac{1}{\left(1-\eta\right)}\frac{1}{p}\hat{k}\left(0,t\right)^{p+2}
\quad\mbox{for all}\quad t_0>0.
\end{equation*}
The Lemma follows from integrating this differential inequality.

\end{proof}

Now we can give a first estimate on the rate of decay of
the
Fourier wave numbers of solutions to (\ref{fastcurvature0}).
\begin{lemma}
\label{lambda2}
There exists $\epsilon_0>0$ which depends on $\lambda$
such that if $t>\frac{T}{2}>0$ 
then there is a constant
constant $b>0$ such that for any $0<\epsilon<\epsilon_0$,
for $n\neq 0$, the following estimate holds,
\begin{equation*}
\left|\hat{k}\left(n,t\right)\right|< b e^{-\mu \left|n\right|}\left(T-t\right)^{\epsilon}
\quad \mbox{whenever} \quad t>\frac{T}{2}.
\end{equation*}
\end{lemma}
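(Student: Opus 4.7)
Our plan is to derive, for each $n\neq 0$, a scalar differential inequality for $\left|\hat{k}(n,t)\right|$ whose linear damping rate is of order $1/(T-t)$ (coming from the Blow-up Lemma) and whose forcing carries the uniform exponential factor $e^{-\mu|n|}$ (from the Trapping Lemma), and then to extract the claimed $(T-t)^{\epsilon}$-decay via an integrating-factor argument.

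Starting from the ODE for $\hat{k}(n,t)$, I separate the linear coefficient $\left(\frac{p+2}{p}-\lambda^2n^2\right)\hat{k}(0,t)^{p+1}$ from the remaining convolution sum $N(n,t):=\sum_{\mathbf{q}\in\mathcal{A}_n}H(p,q_1,q_2)\hat{k}^{*(p+2)}(\mathbf{q},t)$. Pairing with $\overline{\hat{k}(n,t)}$ and taking real parts yields
\[
\frac{d}{dt}\left|\hat{k}(n,t)\right|\le -\left(\lambda^2 n^2-\frac{p+2}{p}\right)\hat{k}(0,t)^{p+1}\left|\hat{k}(n,t)\right|+\left|N(n,t)\right|,
\]
which for $|n|\ge 1$ and $\lambda^2>(p+2)/p$ is genuinely dissipative. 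To estimate $N(n,t)$, I invoke the Trapping Lemma at the reference time $T/2$ to conclude that $|\hat{k}(q,t)|\le Ce^{-\mu|q|}$ uniformly for $q\neq 0$ and $t\ge T/2$, with some fixed $\mu>0$. The dominant contributions to $N(n,t)$ come from the two-body convolutions with two nonzero wave numbers $q_i=m$, $q_j=n-m$ and $p$ copies of $M(t):=\hat{k}(0,t)$; the inequality $|m|+|n-m|\ge|n|$ extracts the factor $e^{-\mu|n|}$, while $|H(p,q_1,q_2)|\le C(1+q_1^2)$ is absorbed by one of the $|q|^{-2}$-tails from Trapping. Terms with three or more nonzero wave numbers carry strictly smaller powers of $M$, so altogether $\left|N(n,t)\right|\le C_1 e^{-\mu|n|}M(t)^p$ for $t\ge T/2$.

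Combining the Blow-up Lemma lower bound $M(t)^{p+1}\ge\frac{p(1-\eta)}{(p+1)(T-t)}$ with the matching upper bound $M(t)\le C_2(T-t)^{-1/(p+1)}$---derived from the ODE $\dot M=M^{p+2}/p+N(0,t)$ using $|N(0,t)|\le CM^p$ to obtain $\dot M\ge\left(\frac{1}{p}-\eta\right)M^{p+2}$ for $t$ close to $T$, then integrating $\dot M/M^{p+2}$ from $t$ to $T$ and using $M(T)=\infty$---and writing $\alpha_n:=\left(\lambda^2 n^2-\frac{p+2}{p}\right)\frac{p(1-\eta)}{p+1}$, the differential inequality becomes
\[
\frac{d}{dt}\left|\hat{k}(n,t)\right|\le -\frac{\alpha_n}{T-t}\left|\hat{k}(n,t)\right|+\frac{C_3\,e^{-\mu|n|}}{(T-t)^{p/(p+1)}}.
\]
Multiplying by the integrating factor $(T-t)^{-\alpha_n}$ and integrating from $T/2$ to $t$ then gives $\left|\hat{k}(n,t)\right|\le b\,e^{-\mu|n|}(T-t)^{\min(\alpha_n,\,1/(p+1))}$, with a logarithm absorbed into $b$ in the borderline case. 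Since $\alpha_n$ is increasing in $|n|$, setting $\epsilon_0:=\min(\alpha_1,\,1/(p+1))>0$ yields the conclusion for every $0<\epsilon<\epsilon_0$.

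The main obstacle I expect is the bound on $N(n,t)$: the $\lambda^2q_1^2$ piece of $H(p,q_1,q_2)$ must be cancelled cleanly against one of the $|q_1|^{-2}$ Trapping tails, and the resulting sums over the remaining wave numbers have to stay convergent while still producing the full factor $e^{-\mu|n|}$. A small portion of the exponential slack must be spent on taming the polynomial growth of $H$, so the working $\mu$ has to be kept strictly below the Trapping exponent $\gamma T/2$.
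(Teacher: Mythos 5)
Your argument is essentially the paper's: the paper writes $\hat{k}(n,t)$ via the Duhamel (variation-of-constants) formula for the Fourier ODE, bounds the exponential weight using the Blow-up Lemma lower bound on $\hat{k}(0,t)^{p+1}$ (yielding the factor $\bigl(\frac{T-t}{T-\tau}\bigr)^{(1-\eta)\alpha(\lambda,n,p)}$), and bounds the convolution forcing by $Ce^{-\gamma|n|s}$ via the Trapping Lemma --- your integrating-factor treatment of the scalar differential inequality is the same computation in differential form. Your extra step of tracking the $M(t)^p\sim (T-s)^{-p/(p+1)}$ growth of the forcing (and the matching upper bound on $M$) is a harmless refinement that only changes the admissible range of $\epsilon$, not the route.
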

\begin{proof} Notice that a solution to the infinite dimensional ODE system in Fourier space corresponding 
to equation (\ref{fastcurvature0}) (whose finite dimensional approximations
are described by (\ref{finiteODE})) can be written as,
\begin{eqnarray}
\label{Fourierform3}
\hat{k}\left(n,t\right)&=&
\hat{k}\left(n,\tau\right)e^{-\left(\lambda^2 n^2-\frac{p+2}{p}\right)\int_{\tau}^t \hat{k}\left(0,s\right)^{p+1}\,ds}\\ \notag
&&
+\int_{\tau}^t e^{-\left(\lambda^2 n^2-\frac{p+2}{p}\right)\int_{s}^t \hat{k}\left(0,\sigma\right)^{p+1}\,d\sigma}\times\\ \notag
&&
\quad \sum_{\mathbf{q}\in \mathcal{A}_n}
H\left(p,q_1,q_2\right)\hat{k}^{*\left(p+2\right)}\left(q_1,\dots,q_{p+2},t\right)\, ds.
\end{eqnarray}
From the Trapping Lemma, we can estimate the nonlinear term in the previous expression as
\[
\left|\sum_{\mathbf{q}\in \mathcal{A}_n}
H\left(p,q_1,q_2\right)\hat{k}^{*\left(p+2\right)}\left(q_1,\dots,q_{p+2},s\right)\right|
\leq Ce^{-\gamma\left|n\right|s}.
\]

By the Blow-up Lemma, given $\eta>0$ there is a $\delta>0$ such that if 
$t>T-\delta>\frac{T}{2}$ then
\begin{eqnarray*}
\left|\hat{k}\left(n,t\right)\right|&\leq& \left|\hat{k}\left(n,T-\delta\right)\right|
\left(\frac{T-t}{\delta}\right)^{\left(1-\eta\right)\alpha\left(\lambda,n,p\right)}+\\
&&\qquad +\left(T-t\right)^{\left(1-\eta\right)\alpha\left(\lambda,n,p\right)}e^{-\mu\left|n\right|}
\int_{T-\delta}^t\frac{1}{\left(T-s\right)^{\left(1-\eta\right)\alpha\left(\lambda,n,p\right)}}\,ds,
\end{eqnarray*}
where
\[
\alpha\left(\lambda,n,p\right)=\left(\lambda^2n^2-\frac{p+2}{p}\right)\frac{p}{p+1}.
\]
With $p$ fixed, by taking $\eta=\frac{1}{2}$, since  
$\alpha\left(\lambda,n,p\right)>0$, we obtain a 
bound,
\begin{equation*}
\left|\hat{k}\left(n,t\right)\right|\leq C\left(\left|\hat{k}\left(n,T-\delta\right)\right|\left(T-t\right)^{\epsilon}
+e^{-\mu\left|n\right|}\left(T-t\right)^{\frac{1}{2}}\right),
\end{equation*}
for any $0<\epsilon<\min\left\{\frac{1}{2}\alpha\left(\lambda,n,p\right), \frac{1}{2}\right\}$, which proves the lemma.

\end{proof}

The previous lemma already implies our stability results. However,
in order to obtain sharp estimates on the rates of uniformization of solutions to (\ref{fastcurvature0}),
and to finish the proof of Theorem \ref{maintheorem2},
Lemma \ref{blowup} does not suffice. In fact,
we can improve a bit on Lemma \ref{blowup}. So we have:
\begin{lemma}
\label{betterblowup2}
There exists a $t_0>0$ such that for all $t\in\left(t_0,T\right)$ 
the following estimate holds
\begin{equation*}
\hat{k}\left(0,t\right)\geq \left(\frac{p}{p+1}\right)^{\frac{1}{p+1}}\frac{1}{\left(\left(T-t\right)+
\left(T-t\right)^{1+\frac{2}{p+1}}\right)^{\frac{1}{p+1}}}.
\end{equation*}
\end{lemma}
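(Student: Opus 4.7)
The plan is to revisit the ODE
\[
\frac{d}{dt}\hat{k}(0,t)=\frac{1}{p}\hat{k}(0,t)^{p+2}+\sum_{\mathbf{q}\in\mathcal{A}_0}H(p,q_1,q_2)\hat{k}^{*(p+2)}(\mathbf{q},t),
\]
and to sharpen the control of the nonlinear remainder by replacing the crude Trapping-Lemma bound (used in the proof of Lemma \ref{blowup}) with the finer estimate from Lemma \ref{lambda2}, which supplies an extra factor $(T-t)^{\epsilon}$ for every nonzero Fourier mode. Each $\mathbf{q}\in\mathcal{A}_0$ has at least two nonzero coordinates; grouping the tuples by the number $j\in\{2,3,\dots,p+2\}$ of such coordinates and applying Lemma \ref{lambda2}, the type-$j$ contribution is of order $\hat{k}(0,t)^{p+2-j}(T-t)^{j\epsilon}$, with absolute summability in the nonzero wave numbers guaranteed by the exponential decay $e^{-\mu|n|}$, which easily absorbs the polynomial factor $H$.

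The dominant term corresponds to $j=2$, where the two nonzero wave numbers must be negatives of one another and one is left with a prefactor $\hat{k}(0,t)^p$. Using Lemma \ref{blowup} in the form $\hat{k}(0,t)^{-2}\leq C_1(T-t)^{2/(p+1)}$, I would rewrite this bound as $C_2(T-t)^{2/(p+1)+2\epsilon}\hat{k}(0,t)^{p+2}$. The contributions with $j\geq 3$ carry a strictly larger exponent of $(T-t)$ after the same trade, so they are absorbed into the same bound. Altogether,
\[
\frac{d}{dt}\hat{k}(0,t)\leq \frac{1}{p}\hat{k}(0,t)^{p+2}\left(1+C(T-t)^{2/(p+1)+2\epsilon}\right).
\]

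Setting $g(t):=\hat{k}(0,t)^{-(p+1)}$, the preceding inequality rearranges to $g'(t)\geq -\tfrac{p+1}{p}(1+C(T-t)^{2/(p+1)+2\epsilon})$. Integrating from $t$ to $T$ and invoking the Blow-up Lemma to guarantee $g(T)=0$ yields
\[
g(t)\leq \frac{p+1}{p}\left((T-t)+C'(T-t)^{1+2/(p+1)+2\epsilon}\right),
\]
and the constant $C'$ is swallowed by the factor $(T-t)^{2\epsilon}$ once $t$ is close enough to $T$, producing the stated estimate with the clean coefficient $1$ in front of $(T-t)^{1+2/(p+1)}$. The main technical obstacle is the bookkeeping of the sum over $\mathcal{A}_0$: one must verify, after trading each missing power of $\hat{k}(0,t)$ for $(T-t)^{1/(p+1)}$ via the Blow-up Lemma, that every $j\geq 3$ contribution is indeed dominated by the $j=2$ piece, so that the latter alone dictates the correction exponent $2/(p+1)$ in the improved blow-up rate.
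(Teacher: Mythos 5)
Your proof is correct and follows essentially the same route as the paper's: derive a differential inequality of the form $\frac{d}{dt}\hat{k}(0,t)\leq \frac{1}{p}\hat{k}(0,t)^{p+2}+(\text{error})$, trade the missing powers of $\hat{k}(0,t)$ in the error for powers of $(T-t)$ via Lemma \ref{blowup}, and integrate $g=\hat{k}(0,t)^{-(p+1)}$ from $t$ to $T$ using $g(T)=0$. The one place you genuinely go beyond the paper is the treatment of the nonlinear remainder. The paper is content with the crude Trapping-Lemma bound $A\hat{k}(0,t)^{p}$, which after the trade gives $\frac{1}{p}+C(T-t)^{2/(p+1)}$ with an unspecified constant $C$; integrating that only yields $g(t)\leq\frac{p+1}{p}(T-t)+C'(T-t)^{1+2/(p+1)}$, and the paper's two-line proof silently passes from this to the clean coefficient $1$ claimed in the statement, which is not justified unless $C'$ happens to be small. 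Your extra factor $(T-t)^{2\epsilon}$, harvested from Lemma \ref{lambda2} mode by mode (with the $j=2$ tuples in $\mathcal{A}_0$ correctly identified as dominant after each missing power of $\hat{k}(0,t)$ is exchanged for $(T-t)^{1/(p+1)}$, and with convergence of the sums guaranteed by the exponential decay in the wave numbers), is exactly what is needed to absorb $C'$ for $t$ close to $T$, so your argument proves the lemma as literally stated. In short: same skeleton, but your version is the more careful one; the paper's version proves the estimate only up to a constant in front of the correction term, which is all that is actually used downstream.
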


\begin{proof}
As in the proof of Lemma \ref{blowup}, the following differential inequality holds
\begin{equation*}
\frac{d}{dt}\hat{k}\left(0,t\right)\leq \frac{1}{p}\hat{k}\left(0,t\right)^{p+2}+
A\hat{k}\left(0,t\right)^p,
\end{equation*}
for a constant $A>0$ independent of $t$. So using Lemma \ref{blowup}, we obtain the differential inequality
\begin{equation*}
\frac{1}{\hat{k}\left(0,t\right)^{p+2}}\frac{d}{dt}\hat{k}\left(0,t\right)\leq 
\frac{1}{p}+C\left(T-t\right)^{\frac{2}{p+1}},
\end{equation*}
which after integration gives the desired inequality.

\end{proof}

We are ready to improve the estimate on the decay of the Fourier coefficients 
of solutions to (\ref{fastcurvature0}), i.e., the estimate provided by Lemma \ref{lambda2}.
In order to proceed, we use the previous lemma to estimate the integral
\[
I=\frac{p+1}{p}\int_{\tau}^t \hat{k}\left(0,t\right)^{p+1}
\]
from below. Indeed, from Lemma \ref{betterblowup2}, since $t$ and $\tau$ are close to $T$, using Taylor's Theorem, a simple computation shows that
\[
I\geq\int_{\tau}^t \frac{1}{T-s+\left(T-s\right)^{1+\frac{2}{p+1}}}\,ds
=-\log\left(\frac{T-t}{T-\tau}\right)+O\left(1\right),
\]
and using this and (\ref{Fourierform3}) to estimate $\hat{k}\left(n,t\right)$
from above, yields
\begin{eqnarray}
\label{thebound}
\left|\hat{k}\left(n,t\right)\right|&\leq& C\left|\hat{k}\left(n,T-\delta\right)\right|
\left(\frac{T-t}{\delta}\right)^{\alpha\left(\lambda,n,p\right)}+\\ \notag
&&C \left(T-t\right)^{\alpha\left(\lambda,n,p\right)}\int_{T-\delta}^t \left(\frac{1}{T-s}\right)^{\alpha\left(\lambda,n,p\right)}
\times\\
&&
\quad \sum_{\mathbf{q}\in\mathcal{A}_n}
\left|H\left(p,q_1,q_2\right)\hat{k}^{*\left(p+2\right)}\left(q_1,\dots,q_{p+2},s\right)\right|\,ds, \notag
\end{eqnarray}
where 
\[
\alpha\left(\lambda,n,p\right)=\left(\lambda^2n^2-\frac{p+2}{p}\right)\frac{p}{p+1}.
\]
Let us now improve on the estimate given by Lemma \ref{lambda2}.
If we introduce the bound from Lemma \ref{lambda2} into (\ref{thebound}), we get
\begin{eqnarray*}
\left|\hat{k}\left(n,t\right)\right|&\leq& \left|\hat{k}\left(n,T-\delta\right)\right|
\left(\frac{T-t}{\delta}\right)^{\alpha\left(\lambda,n,p\right)}+\\
&&\qquad +\left(T-t\right)^{\alpha\left(\lambda,n,p\right)}e^{-\mu'\left|n\right|}
\int_{T-\delta}^t\frac{1}{\left(T-s\right)^{\alpha\left(\lambda,n,p\right)}}\left(T-s\right)^{2\epsilon}\,ds,
\end{eqnarray*}
with $0<\mu'<\mu$, and from which we obtain the estimate
\[
\left|\hat{k}\left(n,t\right)\right|\leq C'
e^{-\mu'\left|n\right|}\left(T-t\right)^{\min\left\{\alpha\left(\lambda,n,p\right),1+2\epsilon\right\}}.
\]
Using this new bound and plugging it into (\ref{thebound}), we improve again
our estimate on $\hat{k}\left(n,t\right)$:
\[
\left|\hat{k}\left(n,t\right)\right|\leq C''
e^{-\mu''\left|n\right|}\left(T-t\right)^{\min\left\{\alpha\left(\lambda,n,p\right),3+4\epsilon\right\}}
\quad (0<\mu''<\mu').
\]
Finally, it should be clear that if we repeat this procedure a finite number 
of times, we arrive at
\begin{equation}
\label{sharpineq}
\left|\hat{k}\left(n,t\right)\right|\leq D
e^{-\nu\left|n\right|}\left(T-t\right)^{\alpha\left(\lambda,n,p\right)},\quad n\neq 0,
\end{equation}
which in turn implies estimate (\ref{maxwell}), i.e., the conclusion of Theorem \ref{maintheorem2}.

\section{A few comments on the stabilization rate towards the constant steady state solution of the normalized 
$p$-curve shortening flow}
\label{stabilitysection}

In this section we study the exponential stability of the steady solution 
$u\equiv 1$ of the normalized $p$-curve shortening flow. Recall that
the normalized $p$-curve shortening flow is obtained from
the unnormalized $p$-curve shortening flow by the process described in section \ref{mainresult}.
To be more precise, we have that the normalized $p$-curve shortening flow is equivalent to the
Boundary Value Problem
\begin{equation}
\label{normalized}
\left\{
\begin{array}{l}
\frac{\partial u}{\partial \tau}=pu^{1+\frac{1}{p}}\frac{\partial^2 u}{\partial\theta^2}+pu^{2+\frac{1}{p}}-pu
\quad \mbox{in}\quad \left[0,\frac{2\pi}{\lambda}\right]\times \left(0,\infty\right)\\
u\left(\cdot,0\right)=\left(\frac{pT}{p+1}\right)^{\frac{1}{p+1}}\psi,
\end{array}
\right.
\end{equation}
with periodic boundary conditions. Recall that this normalized equation is obtained from (\ref{fastcurvature0})
by the rescaling and change of time parameter given by
\[
u\left(\theta,t\right)=\left(\frac{p}{p+1}\right)^{\frac{1}{p+1}}\left(T-t\right)^{\frac{1}{p+1}}k\left(\theta,t\right),
\quad \tau=-\frac{1}{p+1}\log\left(1-\frac{t}{T}\right).
\]
The reader should have noticed that Corollary \ref{expconvergence} does not give a rate of convergence of
the solution to the normalized flow towards the steady solution $u \equiv 1$. In order to
provide rates of convergence towards the steady solution, we will analyze the behavior
of $\hat{k}\left(0,t\right)$ in the case of the unnormalized equation. First observe that
the following lemma holds, and that its proof is an obvious modification of the proof given
in Lemma \ref{betterblowup2}. Again, we are under the hypothesis of
Theorem \ref{maintheorem2}.
\begin{lemma}
There is a $t_0>0$ such that if $t\in\left(t_0,T\right)$, then we have the estimate 
\[
\hat{k}\left(0,t\right)\leq 
\left(\frac{p}{p+1}\right)^{\frac{1}{p+1}}\frac{1}{\left[\left(T-t\right)-
\left(T-t\right)^{1+\frac{2}{p+1}}\right]^{\frac{1}{p+1}}}.
\]
\end{lemma}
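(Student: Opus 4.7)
The plan is to mirror the proof of Lemma \ref{betterblowup2}, but to derive an upper bound on $\hat{k}(0,t)$ from a \emph{lower} differential inequality for $\dot{\hat{k}}(0,t)$, rather than an upper one. Starting from the ODE
\[
\frac{d}{dt}\hat{k}(0,t) = \frac{1}{p}\hat{k}(0,t)^{p+2} + \sum_{\mathbf{q}\in \mathcal{A}_0} H(p,q_1,q_2)\,\hat{k}^{*(p+2)}(\mathbf{q},t),
\]
I would use the Trapping Lemma to bound the absolute value of the correction sum by $A\,\hat{k}(0,t)^p$ for some constant $A > 0$. This is precisely the bookkeeping that was tacitly invoked in the proof of Lemma \ref{blowup}: each $\mathbf{q}\in\mathcal{A}_0$ has at least two nonzero components, the Trapping Lemma gives $|\hat{k}(n,t)|\lesssim 1/n^2$ for $n\neq 0$, and the polynomial growth of $H$ in the $q_i$'s is absorbed by this decay, so the dominant contribution comes from terms with exactly two nonzero indices and $p$ factors of $\hat{k}(0,t)$. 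This yields
\[
\frac{d}{dt}\hat{k}(0,t) \;\geq\; \frac{1}{p}\hat{k}(0,t)^{p+2} - A\,\hat{k}(0,t)^p.
\]

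Next I would divide through by $\hat{k}(0,t)^{p+2}$ and invoke Lemma \ref{blowup} (which gives $\hat{k}(0,t)^{-2}\leq C(T-t)^{2/(p+1)}$ for $t$ close to $T$) to obtain the differential inequality
\[
\frac{1}{\hat{k}(0,t)^{p+2}}\,\frac{d}{dt}\hat{k}(0,t) \;\geq\; \frac{1}{p} - C(T-t)^{2/(p+1)}.
\]
Integrating from $t$ to $T$ and using that $\hat{k}(0,s)\to\infty$ as $s\to T$ (so the boundary term at $T$ vanishes) gives
\[
\frac{1}{(p+1)\,\hat{k}(0,t)^{p+1}} \;\geq\; \frac{T-t}{p} - C'(T-t)^{1+2/(p+1)}.
\]
Inverting this inequality produces the stated upper bound, up to an absorbable constant in front of the $(T-t)^{1+2/(p+1)}$ term in the denominator, which can be arranged by choosing $t_0$ sufficiently close to $T$ so that the correction term stays controllably small relative to $T-t$.

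The main obstacle is the bookkeeping in the first step: obtaining the lower differential inequality with the \emph{correct} correction order $\hat{k}(0,t)^p$ rather than $\hat{k}(0,t)^{p+2}$. The key observation is that the Trapping Lemma provides $n^{-2}$ decay \emph{uniform in $t$} (with constant depending only on $\hat{\psi}(0)$, not on $\hat{k}(0,t)$), so that when $j$ factors among the $p+2$ in a product $\hat{k}^{*(p+2)}(\mathbf{q},t)$ are nonzero-wave-number factors, one loses $j$ powers of $\hat{k}(0,t)$ compared to the leading term, and the minimum value $j=2$ on $\mathcal{A}_0$ dictates the $\hat{k}(0,t)^p$ scale. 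Once this estimate is in hand, everything else is a routine calculation parallel to Lemma \ref{betterblowup2}.
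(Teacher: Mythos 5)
Your proposal is correct and is precisely the ``obvious modification of the proof of Lemma \ref{betterblowup2}'' that the paper invokes without writing out: reverse the sign of the correction term to get the lower differential inequality $\frac{d}{dt}\hat{k}(0,t)\geq \frac{1}{p}\hat{k}(0,t)^{p+2}-A\hat{k}(0,t)^{p}$, divide by $\hat{k}(0,t)^{p+2}$, control $\hat{k}(0,t)^{-2}$ via Lemma \ref{blowup}, and integrate up to the blow-up time using $\hat{k}(0,s)\to\infty$. The only loose end --- the unspecified constant multiplying $(T-t)^{1+\frac{2}{p+1}}$ after integration --- is present to exactly the same degree in the paper's own Lemma \ref{betterblowup2}, and you have flagged it appropriately.
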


If we let
\[
\hat{u}\left(0,t\right)=\left(\frac{p}{p+1}\right)^{\frac{1}{p+1}}\left(T-t\right)^{\frac{1}{p+1}}\hat{k}\left(0,t\right),
\]
and then compute
\[
\hat{u}\left(0,t\right)-1\leq
\frac{\left(T-t\right)^{\frac{1}{p+1}}-\left[\left(T-t\right)-\left(T-t\right)^{1+\frac{2}{p+1}}\right]^{\frac{1}{p+1}}}
{\left[\left(T-t\right)-\left(T-t\right)^{1+\frac{2}{p+1}}\right]^{\frac{1}{p+1}}},
\]
factoring out from the numerator $\left(T-t\right)^{\frac{1}{p+1}}$ and applying Taylor's Theorem we obtain
\[
\hat{u}\left(0,t\right)-1\leq \frac{\left(T-t\right)^{\frac{1}{p+1}}}{\left[\left(T-t\right)-
\left(T-t\right)^{1+\frac{2}{p+1}}\right]^{\frac{1}{p+1}}}\left(T-t\right)^{\frac{2}{p+1}}
\leq C\left(T-t\right)^{\frac{2}{p+1}}.
\]
In the same way, by using Lemma \ref{betterblowup2}, we obtain an estimate
\[
1-\hat{u}\left(0,t\right)\leq C\left(T-t\right)^{\frac{2}{p+1}}.
\]
This implies that for the normalized flow (\ref{normalized}) holds that
\[
\left|\frac{\lambda}{2\pi}\int_0^{\frac{2\pi}{\lambda}}u\left(\theta,\tau\right)\,d\theta-1\right|\leq C\exp\left(-2\tau\right).
\]
Using this estimate and Corollary \ref{expconvergence}, we obtain the estimate
\begin{equation}
\label{stability}
\left\|u\left(\theta,\tau\right)-1\right\|_{L^{\infty}\left(\left[0,\frac{2\pi}{\lambda}\right]\right)}
\leq C\exp\left(-\omega \tau\right),
\end{equation}
where
\[
\omega=\min\left\{\lambda^2p-p-1, 2\right\}.
\]
Notice that 
\[
\lambda^2p-p-1 \leq 2
\]
whenever
\[
\sqrt{\frac{p+2}{p}}<\lambda\leq \sqrt{\frac{p+3}{p}},
\]
so it is in this case that we obtain a rate of decay towards the steady state
corresponding to the first eigenvalue of the
elliptic part of the normalized $p$-curve shortening flow. 
However, if we use estimate (\ref{sharpineq}) in the proof of Lemma \ref{betterblowup2}, 
we can bound
\[
\left|\sum_{\mathbf{q}\in\mathcal{A}_0}H\left(p,q_1,q_2\right)
\hat{k}^{*\left(p+2\right)}
\left(\mathbf{q},t\right)\right|\leq C\left(T-t\right)^{2\alpha\left(\lambda,p\right)}\hat{k}\left(0,t\right)^p
\]
with
\[
\alpha\left(\lambda,p\right)=\left(\lambda^2-\frac{p+2}{p}\right)\frac{p}{p+1},
\]
then from the ODE satisfied 
by $\hat{k}\left(0,t\right)$, we can
obtain the improved bound from below
\[
\hat{k}\left(0,t\right)\geq 
\left(\frac{p}{p+1}\right)^{\frac{1}{p+1}}\frac{1}{\left[\left(T-t\right)+
\left(T-t\right)^{2\alpha\left(\lambda,p\right)+\frac{2}{p+1}+1}\right]^{\frac{1}{p+1}}},
\]
and also the corresponding bound from above. Hence, proceeding as we just did, we arrive at an estimate
\[
\left|\frac{\lambda}{2\pi}\int_0^{\frac{2\pi}{\lambda}}u\left(\theta,\tau\right)\,d\theta-1\right|\leq 
C\exp\left(-\left(2\lambda^2p-p\right)\tau\right).
\]
from which follows that (\ref{stability}) holds now for all $\lambda>\sqrt{\frac{p+2}{p}}$ with
\[
\omega=\min\left\{\lambda^2p-p-1, 2\lambda^2p-p\right\}=\lambda^2 p -p -1.
\]

So we finish with the following result.
\begin{proposition}
Let $\lambda>\sqrt{\frac{p+2}{p}}$, let $\psi>0$ and assume 
that it satisfies the hypothesis of Theorem \ref{maintheorem2}.
Then the solution $u\left(\theta,\tau\right)$ to (\ref{normalized}) 
corresponding to the solution to (\ref{fastcurvature0})
with $\psi$ as initial data, satisfies an estimate
\[
\left\|u\left(\theta,\tau\right)-1\right\|_{L^{\infty}\left[0,\frac{2\pi}{\lambda}\right]}
\leq C\exp\left(-\left(\lambda^2p-p-1\right)\tau\right),
\]
where $C>0$ is a constant that depends on $\psi$, $p$ and  $\lambda$.
\end{proposition}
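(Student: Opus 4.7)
The strategy is to decompose
\[
u(\theta,\tau) - 1 = \bigl(u(\theta,\tau) - \hat{u}(0,\tau)\bigr) + \bigl(\hat{u}(0,\tau) - 1\bigr),
\]
and handle each summand separately. Corollary \ref{expconvergence} applied with $l = 0$ bounds the first summand in $L^\infty$ by $C\exp(-(\lambda^2 p - p - 1)\tau)$, so the task reduces to proving that the mean deviation $\hat{u}(0,\tau)-1$ decays in $\tau$ at least as fast. The naive estimate coming from Lemma \ref{betterblowup2} alone yields only the rate $2$ (the preliminary computation giving $\omega = \min\{\lambda^2 p - p - 1,\,2\}$), so the real content of the proof is a bootstrap using the sharp Fourier decay (\ref{sharpineq}).

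To carry out the bootstrap I would revisit the ODE
\[
\frac{d}{dt}\hat{k}(0,t) = \frac{1}{p}\hat{k}(0,t)^{p+2} + \sum_{\mathbf{q}\in\mathcal{A}_0} H(p,q_1,q_2)\,\hat{k}^{*(p+2)}(\mathbf{q},t),
\]
and estimate the coupling term using (\ref{sharpineq}). Every multi-index $\mathbf{q}\in\mathcal{A}_0$ has at least two nonzero coordinates, so two factors of $(T-t)^{\alpha(\lambda,p)}$ (with $\alpha(\lambda,p)=\alpha(\lambda,1,p)$, the smallest rate over $n\neq 0$) can always be pulled out, while the remaining factors are absorbed into $\hat{k}(0,t)^{p}$. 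The polynomial factor $H(p,q_1,q_2)$ is tamed by the exponential tail $e^{-\nu|q_i|}$ of (\ref{sharpineq}), so the resulting sum converges uniformly. This yields the refined two-sided differential inequality
\[
\frac{d}{dt}\hat{k}(0,t) = \frac{1}{p}\hat{k}(0,t)^{p+2} + O\bigl((T-t)^{2\alpha(\lambda,p)}\bigr)\hat{k}(0,t)^{p}.
\]

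Dividing through by $\hat{k}(0,t)^{p+2}$, integrating from $t$ to $T$, and using $1/\hat{k}(0,T)=0$, I obtain a two-sided envelope for $\hat{k}(0,t)$ of the shape appearing in the discussion preceding the proposition, but with the exponent $1+2\alpha(\lambda,p)+2/(p+1)$ in place of the earlier $1+2/(p+1)$. Translating via $\hat{u}(0,t)=(p/(p+1))^{1/(p+1)}(T-t)^{1/(p+1)}\hat{k}(0,t)$ and Taylor-expanding gives $|\hat{u}(0,\tau)-1|\leq C\exp(-2(\lambda^2p-p-1)\tau)$; since $2(\lambda^2p-p-1)>\lambda^2p-p-1$ under the hypothesis $\lambda^2>(p+2)/p$, combining with the bound for the mean-zero part completes the proof. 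The delicate point is the bookkeeping of the preceding paragraph: one must verify that, uniformly over multi-indices in $\mathcal{A}_0$, exactly two of the Fourier factors can be charged at the slowest rate $\alpha(\lambda,p)$ and the rest at $\hat{k}(0,t)$, and that the exponential tail in (\ref{sharpineq}) is strong enough to dominate both the combinatorial growth of $\mathcal{A}_0$ and the polynomial weight $H(p,q_1,q_2)$.
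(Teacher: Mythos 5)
Your proposal follows essentially the same route as the paper: the same decomposition into the mean-zero part (controlled by Corollary \ref{expconvergence}) and the mean part, with the mean part handled by feeding the sharp decay (\ref{sharpineq}) back into the ODE for $\hat{k}\left(0,t\right)$ via the observation that every $\mathbf{q}\in\mathcal{A}_0$ has at least two nonzero entries, then integrating and Taylor-expanding the resulting two-sided envelope. Your intermediate rate $2\left(\lambda^2p-p-1\right)$ for the mean part differs from the paper's stated $2\lambda^2p-p$, but both exceed $\lambda^2p-p-1$ under the hypothesis, so the conclusion is identical.
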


\end{document}